\documentclass[12pt,leqno]{article}
\usepackage{amsmath,amsthm,amsfonts,amssymb}
\makeatletter
\def\@seccntformat#1{\csname the#1\endcsname.\quad}
\renewcommand\section{\@startsection {section}{1}{\z@}%
                     {-3.5ex \@plus -1ex \@minus -.2ex}%
                     {2.3ex \@plus.2ex}%
                     {\normalfont\large\bf\centering}}
\makeatother
\numberwithin{equation}{section}

\theoremstyle{plain}
\newtheorem{theorem}[equation]{Theorem}
\newtheorem{lemma}[equation]{Lemma}

\theoremstyle{definition}

\newtheorem{example}[equation]{Example}

\theoremstyle{remark}
\newtheorem{remark}[equation]{Remark}
\newtheorem*{remark*}{Remark}

\setlength{\topmargin}{-2mm}
\setlength{\oddsidemargin}{-1.5mm}
\setlength{\evensidemargin}{-1.5mm}
\setlength{\textheight}{23cm} 
\setlength{\textwidth}{16.5cm}
\setlength{\headsep}{-0.2cm}
\setlength{\footskip}{2cm}
\newcommand{\kyosu}{\sqrt{-1}\,}

\newcommand{\bysame}{\underline{\hspace{1.5cm}}}
\newcommand{\llangle}{\langle\!\langle}
\newcommand{\rrangle}{\rangle\!\rangle}
\renewcommand{\Re}{\mathrm{Re}}
\renewcommand{\Im}{\mathrm{Im}}

\title{A construction of diffusion processes associated with sub-Laplacian 
on CR manifolds and its applications}
\author{ Hiroki Kondo 
 \thanks{\ E-mail : h-kondo@math.kyushu-u.ac.jp}
\\ {\small\textit{Graduate School of Mathematics, Kyushu University}}
\and
 Setsuo Taniguchi 
 \thanks{\ E-mail : se2otngc@artsci.kyushu-u.ac.jp}
\\ {\small\textit{Faculty of Arts and Science, Kyushu University}}}
\date{}
\raggedbottom
\begin{document}
\maketitle

\begin{abstract}
A diffusion process associated with the real sub-Laplacian
$\Delta_b$, the real part of the complex Kohn-Spencer laplacian
$\square_b$, on a strictly pseudoconvex CR manifold is
constructed via 
the Eells-Elworthy-Malliavin method
by taking advantage of the metric connection due to Tanaka-Webster.  Using
the diffusion process and the Malliavin calculus, 
the heat kernel and the Dirichlet problem for $\Delta_b$ are studied
in a probabilistic manner. 
Moreover, distributions of stochastic line integrals along
the diffusion process will be investigated.
\end{abstract}

\section*{Introduction}
Let $M$ be an oriented strictly pseudoconvex CR manifold of
dimension $2n+1$, 
and $\Delta_b$ be the real sub-Laplacian on $M$,
i.e. the real part of the Kohn-Spencer laplacian $\square_b$. 
For definitions, see Section~\ref{sec.cr.geometry}.
The first aim of this paper is to construct the diffusion process generated by $-\Delta_b/2$ 
by extending the Eells-Elworthy-Malliavin method \cite{elworthy,malliavin}, 
namely we consider stochastic development of the Brownian motion on $\mathbb{C}^n$ in
the complex unitary bundle of $M$. 
The second aim is to apply the diffusion processes; 
the heat kernel and 
the Dirichlet problem associated with
$\Delta_b$ will be studied in a probabilistic manner 
with the help of the Malliavin calculus.
Moreover, distributions of stochastic line integrals along
the diffusion process will be investigated by the
partial hypoelliptic argument.

The Eells-Elworthy-Malliavin method is one of constructions of the Brownian motion 
on a Riemannian manifold, and realizes the Brownian motion as the 
projection of the solution of the 
stochastic differential equation (SDE in abbreviation) on 
the orthonormal frame bundle over the Riemannian manifold. 
See, for example, \cite{elworthy, hsu, i-w, malliavin, stroock}. 
We will carry out this method on a CR manifold, 
but this time used is a complex unitary frame bundle instead of 
a real orthonormal frame bundle. 
This comes from that the CR structure is defined as a 
complex subbundle $T_{1,0}$ of the complexified tangent bundle $\mathbb{C}TM$. 

To be more precise, recall that in the Eells-Elworthy-Malliavin method on a
Riemannian manifold, the vector fields governing the SDE on
the orthonormal bundle are constructed with the help of the
Riemannian connection.
The SDE corresponds to the stochastic parallel translation
on the Riemannian manifold.
In our constuction on a CR manifold, we take advanage of the
metric connection on the complex subbundle $T_{1,0}$ due to 
Tanaka \cite{tanaka} and Webster \cite{webster} to have vector fields 
$L_1,\ldots,L_n$ on the unitary bundle $U(T_{1,0})$ over $M$.
Solving the SDE on $U(T_{1,0})$ governed by $L_1,\ldots,L_n$ and
projecting its solution onto $M$, we arrive at the diffusion
process $\mathbb{X}=\{(\{X(t)\}_{t\ge0},P_x);x\in M\}$ on $M$ 
generated by $-\Delta_b/2$.
See Section~\ref{sec.diffusion}.

By using the partial hypoelliplicity argument in the Malliavin calculus, 
we will obtain the heat kernel related to this diffusion process, 
that is, we will show the transition probability function of $\mathbb{X}$ has 
a smooth density function $p(t,x,y)$.
Moreover, we will give a sufficient condition for distributions of stochastic line integrals 
of 1-forms on $M$ along the diffusion process $\mathbb{X}$ to 
have smooth density functions.
See Section~\ref{sec.heat}. 

We finally consider the Dirichlet problem associated with $\Delta_b$.  
Let $G$ be a relatively compact open set in $M$ with $C^3$-boundary.
We shall show in a probabilistic manner that, for
each $f\in C(\partial G)$, there is a $u\in C(\overline{G})$ such that 
\begin{equation}\label{eqn.dirichlet}
 \Delta_b u=0 \quad\text{on $G$ in the weak sense,}\quad 
\text{and $u=f$ on $\partial G$.}
\end{equation}
See Theorem \ref{thm.dirichlet}. 
As will be seen in Remark \ref{rem.hypoelliptic}, 
together with hypoelliplicity of $\Delta_b$, 
this $u$ is a classical solution to the Dirichlet problem. 
In the proof, a key role is played by
the local representation of the sub-Laplacian
$\Delta_b$ so that, on every sufficiently small 
coordinate neighborhood $U$, there are $a^\alpha\in {\mathbb C}$
and $C^\infty$-vector fields $Z_\alpha$ with ${\mathbb C}$-valued
coefficients on $U$ so that
\begin{equation}\label{eqn.loc.decomp}
 \Delta_b = - \sum_{\alpha=1}^n (Z_\alpha
 Z_{\overline{\alpha}} 
 + Z_{\overline{\alpha}} Z_\alpha)
 + \sum_{\alpha=1}^n
 (a^\alpha Z_\alpha  + a^{\overline{\alpha}}
 Z_{\overline{\alpha}}),
\end{equation}
and 
\begin{equation}\label{eqn.span}
 \mathrm{span}_{{\mathbb C}} \{ (Z_\alpha)_x,
 (Z_{\overline{\alpha}})_x, 
 [Z_\alpha,Z_{\overline{\alpha}}]_x ; 1\le \alpha\le n
 \} = \mathbb{C} T_xM, \quad x\in U,
\end{equation}
where $[\cdot,\cdot]$ denotes the Lie bracket product, $T_xM$ is
the tangent space of $M$ at $x$, and we have used super and
subscripts $\overline{\alpha}$'s to indicate that complex conjugates
are taken; $b_{\overline{\alpha}}=\overline{b_\alpha}$, and 
$c^{\overline{\alpha}} = \overline{c^\alpha}$.

In Section~\ref{sec.cr.geometry}, we shall give a brief review on CR
geometry.  In the same section, we shall construct vector fields on
the complex bundle $U(T_{1,0})$ over $M$, which are associated with
the metric connection due to Tanaka-Webster. 
These vector fields will be
used in Section~\ref{sec.diffusion} to construct a diffusion process
$\mathbb{X}$ generated by $-\Delta_b/2$. 
The heat kernel and distributions of stochastic line integrals 
along $\mathbb{X}$ are studied in Section~\ref{sec.heat}.
Section~\ref{sec.dirichlet} will be devoted to the study of
Dirichlet problems associated with $\Delta_b$. 

\section{CR geometry}\label{sec.cr.geometry}

\subsection{CR manifolds}
We begin this section with listing the results on CR manifolds 
which we shall use later, 
following Dragomir-Tomassini \cite{dragomir-tomassini} and Lee \cite{lee}. 

A CR manifold $M$ is a real differentiable manifold together with
a complex subbundle $T_{1,0}$ of the complexified tangent bundle
$\mathbb{C}TM=TM\otimes_{\mathbb{R}}\mathbb{C}$ such that $T_{1,0}\cap T_{0,1}=\{0\}$ and
$[T_{1,0},T_{1,0}]\subset T_{1,0}$, where $T_{0,1}=\overline{T_{1,0}}$. 
We consider the case that $M$ is orientable and of real dimension $2n+1$ with
$n\in\mathbb{N}=\{1,2,3,\ldots\}$ and $T_{1,0}$ is of complex dimension $n$, 
i.e. the CR codimension is 1.  

Set $H=\Re(T_{1,0}\oplus T_{0,1})$, which is called 
the Levi distribution of $(M,T_{1,0})$. 
There exists a pseudo-Hermitian structure, that is, 
a real non-vanishing $1$-form $\theta$ on $M$ which annihilates $H$.
For such $\theta$, the Levi form $L_\theta$ of $\theta$ is defined by 
\[
 L_\theta(Z,W) = -\kyosu d\theta(Z,W),
 \quad Z,W\in \Gamma^\infty(T_{1,0}\oplus T_{0,1}),
\]
where
$\Gamma^\infty(V)$ stands for the space of
$C^\infty$ cross sections of a vector bundle $V$.  
Throughout the paper, we assume that $M$ is strictly pseudoconvex, 
that is, the Levi form $L_\theta$ is positive definite. 
Then $T_{1,0}$ is an Hermitian fiber bundle with Hermitian fiber metric $L_\theta$.
Let $T$ be the characteristic direction, that is, 
the unique real vector field on $M$ transverse to $H$, defined by
\begin{equation}\label{eqn.2.1}
 T \rfloor d\theta=0, \quad T\rfloor\theta=1,
\end{equation}
where $T\rfloor \omega$ is the interior product:
$T\rfloor
\omega(X_1,\dots,X_{p-1})=\omega(T,X_1,\dots,X_{p-1})$ for
a $p$-form $\omega$.

As $M$ is strictly pseudoconvex, the $(2n+1)$-form
$\psi=\theta\wedge(d\theta)^n$ on $M$ determines a volume form, 
where we have chosen the orientation of $M$ so that
$\psi$ is a positive form. 
Then it induces the $L^2$-inner product on functions:
\[
 \langle u,v\rangle_\theta = \int_M u\overline{v}\psi, 
\quad u, v\in C_0^{\infty}(M;\mathbb{C})
\equiv \{f+\kyosu g;f,g\in C_0^{\infty}(M)\}.
\]
The Levi form induces a metric
on $H$ (denoted by $L_\theta$ again), and the dual metric $L_\theta^*$ on $H^*$. 
Then the $L^2$-inner product on sections of $H^*$ is
given by 
\[
 \langle \omega,\eta \rangle_\theta
 = \int_M L_\theta^*(\omega,\eta) \psi, \quad
\omega, \eta\in\Gamma^{\infty}(H^*).
\]
Denoting by $r\colon T^*M\to H^*$ the natural restriction mapping, 
we define a section $d_b u$ for $u\in C^\infty(M)$ by 
$d_b u = r \circ du$. 
The real sub-Laplacian $\Delta_b$ on functions is given by
\[
 \langle \Delta_b u,v \rangle_\theta
 = \langle d_b u, d_b v \rangle_\theta,
 \quad v\in C_0^\infty(M).
\]
Similarly, denoting by $\overline{\partial}_b u$ the projection of
$du$ onto $T_{0,1}^*$ for $u\in C^{\infty}(M;\mathbb{C})$, 
we introduce the Kohn-Spencer laplacian $\square_b$ defined by
\[
 \langle \square_b u,v \rangle_\theta
 = \langle \overline{\partial}_b u,
    \overline{\partial}_b v \rangle_\theta, 
 \quad v\in C_0^\infty(M;\mathbb{C}).
\]
These two operator are related to each other by 
\[
 \square_b = \Delta_b + \kyosu nT \quad\text{on $C^{\infty}(M;\mathbb{C})$.}
\]

\subsection{Tanaka-Webster connection}
We now review a connection due to Tanaka \cite{tanaka} and 
Webster \cite{webster}. 

Let $J\colon H\to H$ be the complex structure related to $(M,T_{1,0})$; 
that is, 
the $\mathbb{C}$-linear extension of $J$ is the 
multiplication by $\kyosu$ on $T_{1,0}$ and $-\kyosu$ on $T_{0,1}$, 
where we have used the fact that $H\otimes_{\mathbb{R}}\mathbb{C}=T_{1,0}\oplus T_{0,1}$.  
Moreover, we extend $J$ linearly to $TM$ by $J(T)=0$. 

Since $TM=H\oplus\mathbb{R}T=\{X+aT\mid X\in H, a\in\mathbb{R}\}$, there exists the unique
Riemannian metric $g_{\theta}$ on $M$ satisfying that
\[
g_{\theta}(X, Y)=d\theta(X,JY), \quad g_{\theta}(X,T)=0, \quad g_{\theta}(T,T)=1
\]
for $X, Y\in H$. 
$g_{\theta}$ is called the Webster metric.
We extend $g_{\theta}$ to $\mathbb{C}TM$ $\mathbb{C}$-bilinearly. 

The Tanaka-Webster connection is the unique linear connection $\nabla$ on $M$
satisfying that 
\begin{align}
&\nabla_X Y\in\Gamma^{\infty}(H), \quad X\in\Gamma^{\infty}(TM), Y\in\Gamma^{\infty}(H), \label{eqn.nabla.parallel}\\
&\nabla J=0, \quad \nabla g_{\theta}=0, \label{eqn.nabla.compatible}\\
& T_{\nabla}(Z,W)=0, \quad Z,W\in \Gamma^{\infty}(T_{1,0}), \\
& T_{\nabla}(Z,W)=2\kyosu L_{\theta}(Z,W)T, \quad Z\in \Gamma^{\infty}(T_{1,0}), W\in \Gamma^{\infty}(T_{0,1}),\\
& T_{\nabla}(T,J(X))+J(T_{\nabla}(T,X))=0, \quad X\in \Gamma^{\infty}(TM),
\end{align}
where $\nabla_X$ is the covariant derivative in the direction of $X$ and 
$T_{\nabla}$ is the torsion tensor field of $\nabla$: 
$T_{\nabla}(Z,W)=\nabla_{Z}W-\nabla_{W}Z-[Z,W]$. 

Let $\{Z_{\alpha}\}_{\alpha\in\langle n\rangle}$, where $\langle n\rangle=\{1,\ldots,n\}$, be a local
orthonormal frame for $T_{1,0}$ on an open set $U$, that is, 
$Z_{\alpha}$ is a $T_{1,0}$-valued section defined on $U$ and 
$g_{\theta}(Z_{\alpha},Z_{\overline{\beta}})=\delta_{\alpha\beta}$, where 
$Z_{\overline{\beta}}=\overline{Z_{\beta}}$.  
If we set $\llangle n\rrangle=\{0,1,\ldots,n,\overline{1},\ldots,\overline{n}\}$ and $Z_0=T$, 
then $\{Z_A\}_{A\in\llangle n\rrangle}$ is a local frame for $\mathbb{C}TM$. 
We define Christoffel symbols 
$\Gamma_{AB}^C$ for 
$A, B, C\in\llangle n\rrangle$ by
\[
\nabla_{Z_A}Z_B=\sum_{C\in\llangle n\rrangle}\Gamma_{AB}^C Z_C.
\]
Note that $\Gamma_{AB}^C=0$ unless 
$(B,C)\in\{(\beta,\gamma),(\overline{\beta},\overline{\gamma});\beta,\gamma\in\langle n\rangle\}$, 
because $\nabla_X(\Gamma^{\infty}(T_{1,0}))\subset \Gamma^{\infty}(T_{1,0})$ and 
$\nabla T=0$ by the conditions (\ref{eqn.nabla.parallel}) and (\ref{eqn.nabla.compatible}). 
We also have that
\begin{equation}\label{eqn.nabla.antisym}
\Gamma_{A\beta}^{\gamma}+\Gamma_{A\overline{\gamma}}^{\overline{\beta}}=0,\quad
\beta,\gamma\in\langle n\rangle, A\in\llangle n \rrangle
\end{equation}
by the condition (\ref{eqn.nabla.compatible}).

\subsection{Canonical vector fields}
To construct diffusion processes on CR manifolds in the next section, 
we introduce suitable vector bundles and principal bundles on them. 
The method employed there is a modification of 
the Eells-Elworthy-Malliavin method, 
the one to construct the 
Brownian motion on a Riemannian manifold 
via the SDE on the orthonormal frame bundle 
\cite{elworthy, hsu, i-w, malliavin, stroock}. 

Let $p\colon [a,b]\to M$, where $a<b$, be a smooth curve.
We say that a smooth curve $W\colon[a,b]\to T_{1,0}$ is 
a parallel section along $p$ if
$W(t)\in(T_{1,0})_{p(t)}$ and $\nabla_{\dot{p}}W=0$,  
where the dot always means the differentiation in $t$. 

For any $v\in (T_{1,0})_{p(a)}$, there exists a unique parallel section 
$W$ with $W(a)=v$. 
This can be seen by the localization argument as follows:
Let $\{Z_{\alpha}\}_{\alpha\in\langle n\rangle}$
be a local orthonormal frame for $T_{1,0}$ on $U$ and 
suppose that $p([a,b])\subset U$.
Then for a smooth curve $W\colon[a,b]\to T_{1,0}$ satisfying 
$W(t)\in(T_{1,0})_{p(t)}$, 
it holds that 
\[
W(t)=\sum_{\alpha\in\langle n\rangle}c^{\alpha}(t)(Z_{\alpha})_{p(t)}, 
\]
where $c^{\alpha}(t)=g_{\theta}(W(t),(Z_{\overline{\alpha}})_{p(t)})$ for 
$\alpha\in\langle n\rangle$. 
By the very definition of the covariant derivative, 
\begin{align*}
\nabla_{\dot{p}}W(t)
&= \sum_{\alpha\in\langle n\rangle}(\dot{c}^{\alpha}(t)(Z_{\alpha})_{p(t)}
+c^{\alpha}(t)\nabla_{\dot{p}}Z_{\alpha}(t))\\
&= \sum_{\alpha\in\langle n\rangle}\dot{c}^{\alpha}(t)(Z_{\alpha})_{p(t)}
+\sum_{\substack{A\in\llangle n\rrangle\\ \alpha,\beta\in\langle n\rangle}}
c^{\alpha}(t)g_{\theta}(\dot{p}(t),(Z_{\overline{A}})_{p(t)})
\Gamma_{A\alpha}^{\beta}(p(t))(Z_{\beta})_{p(t)},
\end{align*}
where
we have used the convention that $\overline{0}=0$. 
Therefore $\nabla_{\dot{p}}W=0$ if and only if
\[
\dot{c}^{\beta}(t)+\sum_{\substack{A\in\llangle n\rrangle\\ \alpha\in\langle n\rangle}}
c^{\alpha}(t)g_{\theta}(\dot{p}(t),(Z_{\overline{A}})_{p(t)})
\Gamma_{A\alpha}^{\beta}(p(t))=0
\]
for each $\beta\in\langle n\rangle$. 
Now, as an elementary application of the theory of ordinary differential equations, 
given $v\in (T_{1,0})_{p(a)}$ there exists a unique parallel section $W$ along $p$ such that
$W(a)=v$. 

Parallel sections can be represented locally as follows:

\begin{lemma}\label{lem.Up}
Let $\{Z_{\alpha}\}_{\alpha\in\langle n\rangle}$
be a local orthonormal frame for $T_{1,0}$ on $U$ and 
suppose that $p([a,b])\subset U$.
Then there exists a unique $\Lambda_p\colon [a,b]\to U(n)$,
where $U(n)$ is the group of $n\times n$ unitary matrices, 
such that $\Lambda_p(a)=I_n$, the identity matrix, and
\begin{equation}\label{eqn.Up}
\dot{\Lambda}_p(t)_{\beta}^{\gamma}+
\sum_{\substack{A\in\llangle n\rrangle\\ \delta\in\langle n\rangle}}\Lambda_p(t)_{\beta}^{\delta}g_{\theta}(\dot{p}(t),(Z_{\overline{A}})_{p(t)})\Gamma_{A\delta}^{\gamma}(p(t))=0, 
\end{equation}
where $\dot{\Lambda}_p(t)=(\dot{\Lambda}_p(t)_{\beta}^{\gamma})_{\gamma,\beta\in\langle n\rangle}$, 
holds for each $\beta, \gamma\in\langle n\rangle$. 
Moreover, given $v\in (T_{1,0})_{p(a)}$,
\[
W(t)=\sum_{\beta,\gamma\in\langle n\rangle}\Lambda_p(t)_{\beta}^{\gamma}g_{\theta}(v,(Z_{\overline{\beta}})_{p(a)})(Z_{\gamma})_{p(t)}
\]
is a parallel section along $p$ and satisfies $W(a)=v$. 
\end{lemma}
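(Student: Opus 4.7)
\medskip

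\noindent\textbf{Proof proposal.} My plan is to view (\ref{eqn.Up}) as a linear matrix ODE, get existence/uniqueness from the standard theory, then extract the two non-trivial geometric consequences: unitarity of $\Lambda_p$ and parallelism of the resulting $W$.

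First, rewrite (\ref{eqn.Up}) in matrix form. Put
\[
 B(t)_\delta^\gamma
 = \sum_{A\in\llangle n\rrangle}
 g_\theta\bigl(\dot p(t),(Z_{\overline A})_{p(t)}\bigr)\,
 \Gamma_{A\delta}^\gamma(p(t)),
\]
so that (\ref{eqn.Up}) reads $\dot\Lambda_p(t)=-\Lambda_p(t)B(t)$, with $\Lambda_p(a)=I_n$. Since $p$ is smooth and the coefficients $\Gamma_{A\delta}^\gamma$ are smooth on $U$, the map $t\mapsto B(t)$ is continuous on $[a,b]$, and the standard existence/uniqueness theorem for linear matrix ODEs gives a unique smooth solution $\Lambda_p\colon[a,b]\to M_n(\mathbb{C})$.

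The main step is to prove $\Lambda_p(t)\in U(n)$ for every $t$. For this I would show that $B(t)$ is skew-Hermitian, which then forces $\frac{d}{dt}(\Lambda_p\Lambda_p^*)=-\Lambda_p(B+B^*)\Lambda_p^*=0$, so $\Lambda_p\Lambda_p^*\equiv I_n$ by the initial condition. The skew-Hermiticity combines two facts. On the one hand, because $\nabla$ is a real connection, complex conjugation intertwines $\nabla_{Z_A}Z_\gamma$ with $\nabla_{Z_{\overline A}}Z_{\overline\gamma}$, giving $\overline{\Gamma_{A\gamma}^\delta}=\Gamma_{\overline A\,\overline\gamma}^{\overline\delta}$. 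On the other hand, the compatibility identity (\ref{eqn.nabla.antisym}) applied with $A\to\overline A$ yields $\Gamma_{\overline A\,\overline\gamma}^{\overline\delta}=-\Gamma_{\overline A\delta}^\gamma$. Putting these together,
\[
 \overline{B_\gamma^\delta}
 = \sum_{A}\overline{g_\theta(\dot p,Z_{\overline A})}\,
   \overline{\Gamma_{A\gamma}^\delta}
 = -\sum_{A} g_\theta(\dot p,Z_A)\,\Gamma_{\overline A\delta}^\gamma
 = -B_\delta^\gamma,
\]
where I used that $\dot p$ is real and $g_\theta$ is extended $\mathbb{C}$-bilinearly, and then relabelled the sum $A\leftrightarrow\overline A$ in $\llangle n\rrangle$. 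This delivers $B^*=-B$ and hence $\Lambda_p(t)\in U(n)$; this algebraic identification is the step I expect to require the most care.

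Finally, for the parallel section assertion, set $v^\beta=g_\theta(v,(Z_{\overline\beta})_{p(a)})$ and $c^\gamma(t)=\sum_\beta\Lambda_p(t)_\beta^\gamma v^\beta$, so that the candidate is $W(t)=\sum_\gamma c^\gamma(t)(Z_\gamma)_{p(t)}$. Differentiating $c^\gamma$ and using $\dot\Lambda_p=-\Lambda_p B$ shows
\[
 \dot c^\gamma(t)+\sum_{A,\delta}
 c^\delta(t)\,g_\theta(\dot p(t),(Z_{\overline A})_{p(t)})\,
 \Gamma_{A\delta}^\gamma(p(t))=0,
\]
which is exactly the ODE already derived in the paragraph preceding the lemma as the condition for $\nabla_{\dot p}W=0$. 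The initial value is $W(a)=\sum_\gamma v^\gamma (Z_\gamma)_{p(a)}=v$, the latter equality being the orthonormal-frame expansion of $v\in(T_{1,0})_{p(a)}$ with respect to $\{Z_\beta\}$. This completes the plan.
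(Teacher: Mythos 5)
Your proposal is correct and follows essentially the same route as the paper: existence and uniqueness from linear ODE theory, unitarity by showing $\frac{d}{dt}(\Lambda_p\Lambda_p^*)=0$ via (\ref{eqn.nabla.antisym}), and parallelism by reducing to the coefficient ODE derived just before the lemma. The only difference is that you spell out the skew-Hermiticity of the coefficient matrix (including the conjugation symmetry $\overline{\Gamma_{A\gamma}^{\delta}}=\Gamma_{\overline{A}\,\overline{\gamma}}^{\overline{\delta}}$, which the paper leaves implicit in its barred-index convention), where the paper merely asserts the derivative computation is ``easy to check.''
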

\begin{proof}
It is clear that the condition for $\Lambda_p$ defines a unique curve on 
$M_n(\mathbb{C})$, the group of $n\times n$ complex matrices.
By (\ref{eqn.nabla.antisym}) and (\ref{eqn.Up}) it is easy to check that 
\[
\frac{d}{dt}\biggl(\sum_{\gamma\in\langle n\rangle}
\Lambda_p(t)_{\alpha}^{\gamma}\overline{\Lambda_p(t)_{\beta}^{\gamma}}\biggr)=0
\]
holds for $\alpha,\beta\in\langle n\rangle$, 
which in conjunction with $\Lambda_p(a)=I_n$ implies that $\Lambda_p(t)\in U(n)$. 

We next show the second assertion.
Recall that  
\[
\nabla_{\dot{p}}W(t)
=\sum_{\beta,\gamma\in\langle n\rangle}g_{\theta}(v,(Z_{\overline{\beta}})_{p(a)})
(
\dot{\Lambda}_p(t)_{\beta}^{\gamma}(Z_{\gamma})_{p(t)}+\Lambda_p(t)_{\beta}^{\gamma}\nabla_{\dot{p}}Z_{\gamma}(t)
).
\]
Plugging (\ref{eqn.Up}) and the identity 
\[
\nabla_{\dot{p}}Z_{\gamma}(t)
=\sum_{A\in\llangle n\rrangle}g_{\theta}(\dot{p}(t),(Z_{\overline{A}})_{p(t)})
(\nabla_{Z_A}Z_{\gamma})_{p(t)} 
\]
into this, we obtain the desired equality $\nabla_{\dot{p}}W=0$. 
\end{proof}

Now we introduce the bundles over $M$ given by
\begin{align*}
L(T_{1,0})&=\coprod_{x\in M}\{r\colon\mathbb{C}^n\to(T_{1,0})_x; 
\text{$r$ is a non-singular linear map}\}, \\
U(T_{1,0})&=\{r\in L(T_{1,0}); \text{$r$ is isometric}\}. 
\end{align*}
For $r\in L(T_{1,0})$ with $r\colon \mathbb{C}^n\to(T_{1,0})_x$, let $\pi(r)=x$. 
We write $r\xi$ for the image of $\xi\in\mathbb{C}^n$ by $r\in L(T_{1,0})$. 

The Lie group $U(n)$ acts on $U(T_{1,0})$; 
for each $\Lambda\in U(n)$ we have the map
$R_\Lambda\colon U(T_{1,0})\to U(T_{1,0})$ defined by 
\[
(R_\Lambda r)(\xi)=r\Lambda \xi, \quad r\in U(T_{1,0}), \xi\in\mathbb{C}^n. 
\]
Moreover, if 
$\Lambda:[a,b]\to U(n)$ is a smooth curve with $\Lambda(a)=I_n$ and $r\in U(T_{1,0})$, 
then $\dot{\Lambda}(a)$ is a skew Hermitian matrix and
$\frac{d}{dt}\!\Bigm|_{t=a}R_{\Lambda(t)}r=\lambda(\dot{\Lambda}(a))_r$, where $\lambda$ is given by
\[
\lambda(u)_r=\frac{d}{ds}\!\biggm|_{s=0}\!R_{\exp(su)}r. 
\]
It should be remarked that, while the fiber of $U(T_{1,0})$ is a complex vector space and 
complex group $U(n)$ acts on it, $U(T_{1,0})$ is a real manifold since 
so is the base manifold $M$. 

For smooth curves $p\colon [a,b]\to M$ and $\widehat{p}\colon [a,b]\to U(T_{1,0})$, 
we say that $\widehat{p}$ is a horizontal lift of $p$ to $U(T_{1,0})$ if 
$\pi\circ \widehat{p}=p$ and $\widehat{p}(t)e_{\alpha}$ 
is a parallel section for any $\alpha\in\langle n\rangle$, where
$\{e_{\alpha}\}_{\alpha\in\langle n\rangle}$ is the standard coordinate of $\mathbb{C}^n$. 
For $v\in T_xM$, $r\in\pi^{-1}(x)$ and $\eta\in T_rU(T_{1,0})$, 
we say that $\eta$ is a horizontal lift of $v$ if 
there exist a smooth curve $p$ on $M$ and 
a smooth curve $\widehat{p}$ on $U(T_{1,0})$ which is a horizontal lift of $p$, satisfying
$\widehat{p}(0)=r$, $\dot{\widehat{p}}(0)=\eta$ and $\pi_* \eta=v$. 

For each $v\in T_xM$, there exists a unique horizontal lift
$\eta_x(r)\in T_rU(T_{1,0})$. 
Namely, let $\{Z_{\alpha}\}_{\alpha\in\langle n\rangle}$ be a
local orthonormal frame for $T_{1,0}$ on $U$ and 
suppose the curve $p$ is contained in $U$.  
Let $Z\colon U\to U(T_{1,0})$ be the section determined by
$\{Z_{\alpha}\}_{\alpha\in\langle n\rangle}$,  
i.e. $Z(x)e_{\alpha}=(Z_{\alpha})_x$. 
By virtue of Lemma \ref{lem.Up}, 
$\widehat{p}\colon[a,b]\to U(T_{1,0})$ is a horizontal lift of $p$ if and only if 
$\pi(\widehat{p}(a))=p(a)$ and 
\begin{equation}\label{eqn.horlift}
\widehat{p}(t)=R_{Z(p(a))^{-1}\circ \widehat{p}(a)}\circ R_{\Lambda_p(t)}Z(p(t)), 
\end{equation}
where $Z(p(a))^{-1}\circ \widehat{p}(a)\colon\mathbb{C}^n\to\mathbb{C}^n$ is 
regarded as an element of $U(n)$ and
$\Lambda_p$ is the curve defined in 
Lemma \ref{lem.Up}. 

Under the identification of $Z(x)\in U(T_{1,0})$ with
$((Z_1)_x,\ldots,(Z_n)_x)\in(T_{1,0})_x^n$, 
we have $R_\Lambda Z(x)=Z(x)\Lambda$ for $\Lambda\in U(n)$. 
Then we can calculate as
\[
\frac{d}{dt}(R_{\Lambda_p(t)}Z(p(t)))
=\frac{d}{dt}(Z(p(t))\Lambda_p(t))
=Z_*(\dot{p}(t))\Lambda_p(t)+Z(p(t))\dot{\Lambda}_p(t).
\]
By differentiating (\ref{eqn.horlift}) and 
substituting the above identity, we arrive at 
the unique horizontal lift of $v$:
\begin{equation}\label{eqn.horliftvec}
\eta_r(v)=(R_{Z(x)^{-1}\circ r})_* (Z_*(v)-\lambda(\Phi(v))_{Z(x)})
\in T_rU(T_{1,0}), 
\end{equation}
where $\Phi\colon T_xM\to \mathfrak{u}(n)$, 
$\mathfrak{u}(n)$ being the set of $n\times n$ skew Hermitian matrices, 
is defined by
\[
\Phi(v)
=\biggl(
\sum_{A\in\llangle n\rrangle}g_{\theta}(v,(Z_{\overline{A}})_x)\Gamma_{A\beta}^{\gamma}(x)
\biggr)_{\beta,\gamma\in\langle n\rangle}.
\]

For each $r\in U(T_{1,0})$, 
the horizontal subspace at $r$ is defined by 
\[
\mathrm{Hor}_rU(T_{1,0})=\{\eta_r(v);v\in T_xM\}\subset T_rU(T_{1,0}). 
\]
If we set $\mathrm{Ver}_rU(T_{1,0})=\mathrm{Ker}(\pi_*\colon T_rU(T_{1,0})\to T_{\pi(r)}M)$, 
the vertical subspace, then
\[
T_rU(T_{1,0})=\mathrm{Ver}_rU(T_{1,0})\oplus\mathrm{Hor}_rU(T_{1,0})
\]
holds. 

$\eta_r$ extends naturally to a $\mathbb{C}$-linear map from 
$T_xM\otimes_{\mathbb{R}}\mathbb{C}$ to $\mathrm{Hor}_rU(T_{1,0})\otimes_{\mathbb{R}}\mathbb{C}$. 
Then for each $\xi\in\mathbb{C}^n$, we can define the canonical vector field $L(\xi)$ by
$L(\xi)_r=\eta_r(r\xi)$ for $r\in U(T_{1,0})$. 
We set
\[
L_{\alpha}=L(e_{\alpha}), \quad \alpha\in\langle n\rangle
\]
and call $\{L_{\alpha}\}_{\alpha\in\langle n\rangle}$ 
the canonical vector fields.  

Let $\{Z_{\alpha}\}_{\alpha\in\langle n\rangle}$ be a local orthonormal frame for $T_{1,0}$ on $U$. 
Define $\{e_{\alpha}^{\beta}(r)\}\in\mathbb{C}^n\otimes\mathbb{C}^n$ for $r\in L(T_{1,0})$ with $\pi(r)\in U$ by
$r(e_{\alpha})=\sum_{\beta\in\langle n\rangle}e_{\alpha}^{\beta}(r)(Z_{\beta})_{\pi(r)}$. 
We can then introduce a local coordinate system $\{(x^k,e_{\alpha}^{\beta})\}$ of $L(T_{1,0})$, 
$(x^k)_{1\le k\le 2n+1}$ being a local coordinate system of $M$. 
With respect to this coordinate 
we represent the canonical vector field $L_{\alpha}$, $\alpha\in\langle n\rangle$ as follows. 

Recall that $U(T_{1,0})$ can be identified with $M\times U(n)$ locally, 
and under this identification $R_\Lambda((x,e))=(x,e\Lambda)$ for $(x,e)\in M\times U(n)$. 
Therefore it holds that
\[
(R_{Z(x)^{-1}\circ r})_*Z_*(v)
=v, \quad v\in T_xM\otimes_{\mathbb{R}}\mathbb{C},
\]
where $Z\colon U\to U(T_{1,0})$ is the section determined by
$\{Z_{\alpha}\}_{\alpha\in\langle n\rangle}$ as before. 
Since
\begin{align*}
\lambda\biggl(\Phi\biggl(
\mathrm{Re}\sum_{\beta}e_{\alpha}^{\beta}Z_{\beta}\biggr)\biggr)
&=
\frac{1}{2}
\sum_{\beta,\gamma,\delta\in\langle n\rangle}
\biggl(
(
e_{\alpha}^{\beta}\Gamma_{\beta\delta}^{\gamma}
+e_{\overline{\alpha}}^{\overline{\beta}}\Gamma_{\overline{\beta}\delta}^{\gamma}
)\frac{\partial}{\partial e_{\delta}^{\gamma}}
+
(
e_{\overline{\alpha}}^{\overline{\beta}}\Gamma_{\overline{\beta}\overline{\delta}}^{\overline{\gamma}}
+e_{\alpha}^{\beta}\Gamma_{\beta\overline{\delta}}^{\overline{\gamma}}
)\frac{\partial}{\partial e_{\overline{\delta}}^{\overline{\gamma}}}
\biggr)
, \\
\lambda\biggl(\Phi\biggl(
\mathrm{Im}\sum_{\beta}e_{\alpha}^{\beta}Z_{\beta}\biggr)\biggr)
&=\frac{1}{2\kyosu}
\sum_{\beta,\gamma,\delta\in\langle n\rangle}
\biggl(
(
e_{\alpha}^{\beta}\Gamma_{\beta\delta}^{\gamma}
-e_{\overline{\alpha}}^{\overline{\beta}}\Gamma_{\overline{\beta}\delta}^{\gamma}
)\frac{\partial}{\partial e_{\delta}^{\gamma}}
-
(
e_{\overline{\alpha}}^{\overline{\beta}}\Gamma_{\overline{\beta}\overline{\delta}}^{\overline{\gamma}}
-e_{\alpha}^{\beta}\Gamma_{\beta\overline{\delta}}^{\overline{\gamma}}
)\frac{\partial}{\partial e_{\overline{\delta}}^{\overline{\gamma}}}
\biggr)
,  
\end{align*}
we have from (\ref{eqn.horliftvec}) that
\begin{equation}\label{eqn.canvec}
(L_{\alpha})_r
=
\sum_{\beta\in\langle n\rangle}e_{\alpha}^{\beta}Z_{\beta}
-\sum_{\beta,\gamma,\delta,\varepsilon\in\langle n\rangle}
\Gamma_{\beta\delta}^{\gamma}
e_{\varepsilon}^{\delta}
e_{\alpha}^{\beta}
\frac{\partial}{\partial e_{\varepsilon}^{\gamma}}
-\sum_{\beta,\gamma,\delta,\varepsilon\in\langle n\rangle}
\Gamma_{\beta\overline{\delta}}^{\overline{\gamma}}
e_{\overline{\varepsilon}}^{\overline{\delta}}
e_{\alpha}^{\beta}
\frac{\partial}{\partial e_{\overline{\varepsilon}}^{\overline{\gamma}}}. 
\end{equation}

\section{Construction of a diffusion process}\label{sec.diffusion}
In this section, we construct a diffusion process 
$\mathbb{X}=\{(\{X(t)\}_{t\ge0},P_x);x\in M\}$ generated by $-\Delta_b/2$.

Let $\{L_\alpha\}_{\alpha\in\langle n\rangle}$ 
be the canonical vector fields on $U(T_{1,0})$ constructed
in the previous section.  
Take a $\mathbb{C}^n$-valued
Brownian motion 
$\{B(t)=(B^1(t),\dots,B^n(t))\}_{t\ge0}$, that is, 
$\{B(t)\}_{t\ge0}$ is a
$\mathbb{C}^n$-valued continuous 
martingale with $\langle B^\alpha,B^\beta\rangle(t)=0$ and 
$\langle B^\alpha,B^{\overline{\beta}}\rangle(t)= \delta_{\alpha \beta} t$, 
where $\langle M,N\rangle(t)$ denotes the quadratic
variation of continuous martingales  
$\{M(t)\}_{t\ge0}$ and $\{N(t)\}_{t\ge0}$. 
Let $\{r(t)=r(t,r,B)\}_{t\ge0}$ be the
unique solution to an 
SDE on $U(T_{1,0})$:  
\begin{equation}\label{eqn.3.1}
 dr(t) =\sum_{\alpha\in\langle n\rangle}
(L_\alpha(r(t))\circ dB^\alpha(t) +
 L_{\overline{\alpha}}(r(t)) \circ dB^{\overline{\alpha}}(t)), 
 \quad r(0) =r \in U(T_{1,0}),
\end{equation}
or equivalently
\begin{equation}\label{eqn.3.2}
 dr(t) =\sum_{\alpha\in\langle n\rangle}(
\sqrt{2}\, \Re L_\alpha(r(t)) \circ d\xi^\alpha(t)
+\sqrt{2}\, \Im L_\alpha(r(t)) \circ d\eta^\alpha(t)),
 \quad r(0) =r \in U(T_{1,0}),
\end{equation}
where $\Re L_\alpha=(L_\alpha+L_{\overline\alpha})/2$, 
$\Im L_\alpha=(L_\alpha-L_{\overline\alpha})/2\kyosu$, 
$\xi^\alpha(t)=\sqrt{2}\, \Re B^\alpha(t)$ and
$\eta^\alpha(t)=\sqrt{2}\, \Im B^\alpha(t)$.  
The process $r(t)$ may explode.  
Note that 
$(\xi^1(t),\eta^1(t),\dots,\xi^n(t), \eta^n(t))$ is an $\mathbb{R}^{2n}$-valued Brownian motion.
The manipulation of taking the real part on the right hand side of 
the SDE (\ref{eqn.3.1}) is due to that $U(T_{1,0})$ is a real manifold. 

Let $\{Z_\alpha\}_{\alpha\in\langle n\rangle}
$ be a local orthonormal frame for $T_{1,0}$  
and $(x^k,e_\alpha^\beta)$ be the associated
local coordinate of $L(T_{1,0})$ as in the previous section.  
Then (\ref{eqn.3.1}) can be rewritten locally as
\begin{equation}\label{eqn.3.3}
 \left\{
 \begin{aligned}
   dx(t) & = \sum_{\alpha,\beta\in\langle n\rangle}(e_\alpha^\beta(t) Z_\beta(x(t))\circ dB^\alpha(t)
           +e_{\overline{\alpha}}^{\overline{\beta}}(t)
             Z_{\overline{\beta}}(x(t)) \circ  
             dB^{\overline{\alpha}}(t)),
   \\
   de_{\varepsilon}^{\gamma}(t) & = 
     - \sum_{\alpha,\beta,\delta\in\langle n\rangle}
(\Gamma_{\beta\delta}^{\gamma}(x(t)) e_{\varepsilon}^{\delta}(t) e_{\alpha}^{\beta}(t)
\circ dB^{\alpha}(t)
     + 
\Gamma_{\overline{\beta}\delta}^{\gamma}(x(t)) e_{\varepsilon}^{\delta}(t) e_{\overline{\alpha}}^{\overline{\beta}}(t)
\circ dB^{\overline{\alpha}}(t)).
 \end{aligned}
 \right.
\end{equation}
Hence it follows from the uniqueness of
$\{r(t,r,B)\}_{t\ge0}$ that 
$r(t,r\Lambda,\overline{\Lambda}B) 
 = r(t,r,B)$ for every unitary matrix $\Lambda$.  
Denoting by $\widetilde M$ a one-point
compactification of $M$, 
we have that the induced measures $Q_r$ of
$\pi(r(\cdot,r,B))$ on $C([0,\infty);\widetilde M)$, the space of
$\widetilde M$-valued continuous functions defined on $[0,\infty)$,
coincide for all $r\in \pi^{-1}(x)$.  Put
\[
 P_x = Q_r,\quad r\in \pi^{-1}(x).
\]

Set 
\[
 \mathcal{L} = \frac{1}{2} \sum_{\alpha\in\langle n\rangle} 
(L_\alpha L_{\overline{\alpha}} + L_{\overline{\alpha}} L_\alpha) |_M. 
\]
It is easily seen that 
\[
f(X(t))-\int_0^t {\mathcal L} f(X(s)) ds
\]
is a martingale under $P_x$ for every $x\in M$ and 
$f\in C_0^\infty(M)$, where $X(t)$ denotes the position of
$X\in C([0,\infty);\widetilde M)$ at time $t$. 
By a straightforward computation, we have a local representation of 
$\mathcal{L}$ as follows:
\begin{equation}\label{eqn.3.5}
 \mathcal{L} = \frac{1}{2} \biggl(\sum_{\alpha\in\langle n\rangle}
(Z_\alpha Z_{\overline{\alpha}} + Z_{\overline{\alpha}}Z_\alpha)
-
 \sum_{\alpha,\beta\in\langle n\rangle}  
 (\Gamma_{\overline{\beta}\beta}^\alpha Z_\alpha 
 + \Gamma_{\beta\overline{\beta}}^{\overline{\alpha}} Z_{\overline{\alpha}})
\biggr).
\end{equation}
Recall, moreover, an identity that 
\[
d_b f = \sum_{\alpha\in\langle n\rangle}(Z_\alpha f \theta^\alpha + Z_{\overline{\alpha}} f \theta^{\overline{\alpha}})
\]
and Greenleaf's result \cite{greenleaf} that
\[
 \langle Z_\alpha f,\overline{g}\rangle_\theta
 = \left\langle f,\overline{\bigl(-Z_{\overline{\alpha}}+\sum_\beta
 \Gamma_{\beta\overline{\beta}}^\alpha \bigr)g} 
 \right\rangle_\theta.
\]
Plugging these into (\ref{eqn.3.5}), we see that
\[
 \mathcal{L} = -\frac{1}{2} \Delta_b.
\]
Thus we have shown that
\begin{theorem}\label{thm.diffusion}
There exists a diffusion process $\mathbb{X}=\{(\{X(t)\}_{t\ge0},P_x);x\in M\}$
generated by $-\Delta_b/2$ and which is obtained via the SDE
\eqref{eqn.3.1}.
\end{theorem}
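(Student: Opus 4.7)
The plan is to follow the Eells--Elworthy--Malliavin strategy adapted to the complex unitary frame bundle, verifying in turn (i) existence of the horizontal process on $U(T_{1,0})$, (ii) well-definedness of the projected law on $M$, and (iii) identification of its generator with $-\Delta_b/2$.

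First I would argue existence and uniqueness for the SDE \eqref{eqn.3.1} up to an explosion time. The canonical vector fields $L_1,\dots,L_n$ built in Section~\ref{sec.cr.geometry} are globally defined and smooth on the manifold $U(T_{1,0})$, so \eqref{eqn.3.2} is a standard Stratonovich SDE driven by the $\mathbb{R}^{2n}$-valued Brownian motion $(\xi^\alpha,\eta^\alpha)_{\alpha\in\langle n\rangle}$ on a smooth manifold, for which the theory (see, e.g., \cite{i-w}) yields a unique strong solution $\{r(t,r,B)\}$ living in $\widetilde{U(T_{1,0})}$ up to explosion. The local coordinate form \eqref{eqn.3.3} is just the readout of \eqref{eqn.3.1} through the formula \eqref{eqn.canvec} for $L_\alpha$, and I would record it for later use.

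Next I would establish the key invariance $r(t,r\Lambda,\overline{\Lambda}B)=r(t,r,B)\cdot\Lambda$ for $\Lambda\in U(n)$, which follows from the $U(n)$-equivariance of the canonical vector fields together with the uniqueness of the solution; since $\overline{\Lambda}B$ is again a $\mathbb{C}^n$-valued Brownian motion with the same covariance, projecting to $M$ gives $\pi(r(\cdot,r\Lambda,\overline{\Lambda}B))=\pi(r(\cdot,r,B))$ in law. This shows that the law $Q_r$ of $\pi(r(\cdot,r,B))$ on $C([0,\infty);\widetilde{M})$ depends only on $\pi(r)=x$, and so $P_x:=Q_r$ is well defined, yielding the candidate family $\mathbb{X}$.

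To extract a generator, I would apply Itô's formula to $f\circ\pi$ for $f\in C_0^\infty(M)$. The Stratonovich corrections combine into
\[
\mathcal{L}=\frac{1}{2}\sum_{\alpha\in\langle n\rangle}(L_\alpha L_{\overline{\alpha}}+L_{\overline{\alpha}}L_\alpha)\bigl|_M,
\]
so that $f(X(t))-\int_0^t \mathcal{L} f(X(s))\,ds$ is a local martingale under $P_x$; the horizontality of the $L_\alpha$ ensures the result is indeed a function of the base point only, so that $\mathcal{L}$ descends to $M$. A routine but careful computation using \eqref{eqn.canvec} and the antisymmetry \eqref{eqn.nabla.antisym} then produces the local formula \eqref{eqn.3.5} for $\mathcal{L}$.

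The main obstacle, and the step that really uses CR geometry rather than stochastic analysis, is the identification $\mathcal{L}=-\Delta_b/2$. I would deduce it from \eqref{eqn.3.5} by pairing against a test function $v\in C_0^\infty(M)$ and invoking Greenleaf's integration-by-parts identity together with the decomposition $d_b f=\sum_\alpha(Z_\alpha f\,\theta^\alpha+Z_{\overline{\alpha}}f\,\theta^{\overline{\alpha}})$. Rewriting $\langle d_b f, d_b v\rangle_\theta$ and moving $Z_\alpha$ off $f$ via Greenleaf's formula produces exactly the sum in \eqref{eqn.3.5} with opposite sign, giving $\langle\Delta_b f,v\rangle_\theta=-2\langle\mathcal{L} f,v\rangle_\theta$ and hence $\mathcal{L}=-\Delta_b/2$. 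Combining this with the martingale property established above completes the proof.
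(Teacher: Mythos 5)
Your proposal follows essentially the same route as the paper: solve the SDE \eqref{eqn.3.1} on $U(T_{1,0})$, use uniqueness and $U(n)$-equivariance to see that the projected law depends only on $\pi(r)$, compute $\mathcal{L}=\frac{1}{2}\sum_\alpha(L_\alpha L_{\overline{\alpha}}+L_{\overline{\alpha}}L_\alpha)|_M$ in the local form \eqref{eqn.3.5}, and identify $\mathcal{L}=-\Delta_b/2$ via Greenleaf's adjoint formula and the expression for $d_b f$. The argument is correct (your equivariance identity $r(t,r\Lambda,\overline{\Lambda}B)=r(t,r,B)\Lambda$ is in fact the precise form of what the paper uses), so there is nothing further to add.
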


\begin{example}
Let $\mathbb{H}_n=\mathbb{C}^n\times \mathbb{R}$ be the
$(2n+1)$-dimensional Heisenberg group with a coordinate system
$(z,t)$, $z=(z^1,\ldots,z^n)\in \mathbb{C}^n$, $t\in \mathbb{R}$.  Define 
\begin{align*}
 & \theta = \frac{1}{2} 
\biggl(dt - \kyosu\sum_{\alpha=1}^n 
( 
 \overline{z^{\alpha}} dz^\alpha - z^\alpha d\overline{z^{\alpha}}
) \biggr),
 \\
 &
 T_{1,0} = \bigoplus_{\alpha=1}^n \mathbb{C}Z_{\alpha}
, \quad\text{where }
 Z_\alpha = \frac{\partial}{\partial z^\alpha} + \kyosu
 \overline{z^{\alpha}} \frac{\partial}{\partial t}.
\end{align*}
Then $\mathbb{H}_n$ is a strictly pseudoconvex CR manifold, 
see \cite{dragomir-tomassini}.  Since
$\{Z_\alpha\}_{\alpha\in\langle n\rangle}$ is a global orthonormal frame for $T_{1,0}$ and 
\[
 d\theta = \kyosu\sum_{\alpha=1}^n dz^\alpha\wedge
 d\overline{z^{\alpha}}, 
 \quad d(dz^\alpha)=0,
\]
the associated covariant derivation is a null mapping. 
In particular, 
$\Delta_b=-\sum_\alpha \bigl(Z_\alpha Z_{\overline{\alpha}} + Z_{\overline{\alpha}}Z_\alpha\bigr)$. 
The diffusion process described in Theorem \ref{thm.diffusion} is
exactly the same one as that studied by Gaveau in \cite{gaveau}.
\end{example}

\begin{remark}
Diffusion processes on sub-Riemannian manifolds, which include CR manifolds, 
are studied from the point of view of sub-Riemannian geometry. 
For example, In Gordina-Laetsch \cite{gordina-laetsch} diffusion processes
on sub-Riemannian manifolds are constructed as the limit of random walks
constructed piecewisely via the Hamiltonian-flow associated with
a sub-Riemannian structure. 
\end{remark}

\section{Heat kernel and stochastic line integral}\label{sec.heat}
In this section, we apply the result \cite{taniguchi-ZW} on
partial hypoellipticity to the diffusion process constructed
in the previous section and stochastic line integrals along
the diffusion process.

We first consider the heat
equation 
\begin{equation}\label{eqn.heat.eq}
\frac{\partial}{\partial t}u=-\frac{1}{2}\Delta_b u, \quad
u(0,x)=f(x), f\in C_b^{\infty}(M), 
\end{equation}
via the diffusion process $\mathbb{X}=\{(\{X(t)\}_{t\ge0},P_x);x\in M\}$ constructed
in Theorem \ref{thm.diffusion}.

By Whitney's embedding theorem, we may think of $U(T_{1,0})$ as a
closed submanifold of $\mathbb{R}^k$ for some $k$.  We further
assume that

\begin{itemize}
\item[(H)] there exist $C^\infty$ vector fields $L'_\alpha$,
$\alpha\in\langle n\rangle$, on $\mathbb{R}^k$ with $\mathbb{C}$-valued
coefficients such that 
(i) $L_\alpha=L'_\alpha$ on $U(T_{0,1})$, and 
(ii) the coefficients of $L'_\alpha$ and their derivatives of all orders are bounded.
\end{itemize}

\smallskip\noindent
The hypothesis (H) implies that $r(t)$ does not explode. 
For example, this hypothesis is fulfilled if $M$ is compact.  We shall establish

\begin{theorem}\label{thm.density}
Assume that (H) holds.  Then there is a $p\in
C^\infty((0,\infty)\times M\times M)$ such that
\[
 P_x(X(t)\in dy) = p(t,x,y)\psi(dy).
\]
\end{theorem}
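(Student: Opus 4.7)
The plan is to run the Malliavin-calculus machinery on the lifted process $\{r(t)\}_{t\ge0}$ solving the SDE \eqref{eqn.3.1} on $U(T_{1,0})$, and then descend to $M$ via the partial-hypoellipticity result of Taniguchi \cite{taniguchi-ZW}. The hypothesis (H), combined with the Whitney embedding $U(T_{1,0})\hookrightarrow\mathbb{R}^k$, lets us replace $L_\alpha$ by its extension $L'_\alpha$ so that \eqref{eqn.3.2} becomes an SDE on $\mathbb{R}^k$ with $C^\infty$ coefficients that are bounded together with all derivatives. In particular, $r(t)$ is non-explosive, all Malliavin derivatives of $r(t)$ lie in every $L^p$, and the standard criterion (smoothness of the density of $\pi(r(t))$ on $M$) reduces to a uniform lower bound on a certain reduced Malliavin covariance matrix.

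First I would cast the heat kernel question as one about the density of the $\widetilde{M}$-valued random variable $X(t)=\pi(r(t))$. Via a partition of unity and local coordinates, it suffices to prove that for each chart $(U,(x^k))$ the $\mathbb{R}^{2n+1}$-valued random variable $x(t)=(x^k(t))$ has a $C^\infty$ density on $\{x(t)\in U\}$. This is exactly the setting of the partial-hypoellipticity theorem in \cite{taniguchi-ZW}: smoothness of the marginal law of a prescribed set of components of a non-degenerate SDE under a Hörmander-type condition only on those components.

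The core step is to verify that condition, which is a bracket-generating property of the driving vector fields after projection by $\pi_*$. From the explicit formula \eqref{eqn.canvec} we have
\[
 \pi_*(L_\alpha)_r=\sum_{\beta\in\langle n\rangle} e_\alpha^\beta(r)\,(Z_\beta)_{\pi(r)},
\qquad
 \pi_*(L_{\overline\alpha})_r=\sum_{\beta\in\langle n\rangle} e_{\overline\alpha}^{\overline\beta}(r)\,(Z_{\overline\beta})_{\pi(r)},
\]
so that for any $r\in U(T_{1,0})$ the real/imaginary parts of the $\pi_*L_\alpha$'s span the Levi distribution $H_{\pi(r)}$. The projection of a single bracket $[L_\alpha,L_{\overline\alpha}]$ onto $TM$ equals $[Z_{\alpha'},Z_{\overline{\alpha'}}]$ for the local frame $Z_{\alpha'}=\sum_\beta e_\alpha^\beta Z_\beta$ (this uses that $\pi$ is a submersion together with the horizontal nature of the $L_\alpha$, the vertical terms in \eqref{eqn.canvec} being killed by $\pi_*$). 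Combined with the strict-pseudoconvexity identity \eqref{eqn.span}, this shows
\[
 \mathrm{span}_{\mathbb{C}}\{\pi_* L_\alpha,\pi_* L_{\overline\alpha},\pi_*[L_\alpha,L_{\overline\alpha}];\alpha\in\langle n\rangle\}=\mathbb{C}T_{\pi(r)}M
\]
at every $r$, i.e.\ the real Hörmander condition holds at step two on the $M$-components.

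With this bracket-generating condition in hand, \cite{taniguchi-ZW} yields that $x(t)$ admits a $C^\infty$ density on each chart, which patches to a smooth density $\tilde p(t,x,y)$ of the law of $X(t)$ with respect to a fixed smooth volume form on $M$. Since the Webster volume form $\psi=\theta\wedge(d\theta)^n$ is smooth and strictly positive, dividing by its local density produces a smooth $p(t,x,y)$ satisfying $P_x(X(t)\in dy)=p(t,x,y)\psi(dy)$; joint smoothness in $(t,x,y)$ then follows from the Markov property and standard arguments that propagate smoothness of the Malliavin density in the time and starting-point variables. I expect the main technical obstacle to be the clean verification that the vertical components of $[L_\alpha,L_{\overline\alpha}]$ contribute nothing under $\pi_*$, together with checking that the abstract hypotheses of the partial-hypoellipticity theorem in \cite{taniguchi-ZW} apply verbatim to the present lifted system; once these bookkeeping points are settled, the rest is routine Malliavin calculus.
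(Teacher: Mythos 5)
Your proposal follows essentially the same route as the paper: invoke the partial-hypoellipticity theorem of \cite{taniguchi-ZW} (the paper also cites \cite[Lemma 3.1]{taniguchi-OJM}) to reduce the claim to a step-two spanning condition on the $\pi_*$-projections of the driving fields, then verify it from $\pi_* L_\alpha=\sum_\beta e_\alpha^\beta Z_\beta$ together with the strict-pseudoconvexity identity $\theta([Z_\alpha,Z_{\overline\beta}])=-2\sqrt{-1}\,\delta_{\alpha\beta}$, which forces the $T$-direction to appear in $\pi_*[L_\alpha,L_{\overline\alpha}]$. This matches the paper's proof (its displays \eqref{eqn.3.6}--\eqref{eqn.3.10}) in all essentials.
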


\begin{proof}
Recall the expression (\ref{eqn.3.2}). 
By virtue of \cite[Theorem 3.1]{taniguchi-ZW} and 
\cite[Lemma 3.1]{taniguchi-OJM}, it suffices to show that
\begin{equation}\label{eqn.3.6}
 \mathrm{span}_\mathbb{R} \{
 (\pi_*)_r \Re L_\alpha, (\pi_*)_r \Im L_\alpha,
 (\pi_*)_r [\Re L_\alpha,\Im L_\alpha]; \alpha\in\langle n\rangle
 \} = T_{\pi(r)} M
\end{equation}
for every $r\in U(T_{1,0})$,
where $\mathrm{span}_\mathbb{R}$ stands for taking all real linear
combinations. 

To see this, let
$\{Z_\alpha\}_{\alpha\in\langle  n\rangle}$ be a local
orthonormal frame for $T_{1,0}$, and 
$\{\theta,\theta^\alpha,\theta^{\overline{\alpha}}\}$ be the dual basis of
$\{T,Z_\alpha,Z_{\overline{\alpha}}\}$. 
By (\ref{eqn.canvec}), it holds that 
\begin{equation}\label{eqn.3.7}
 (\pi_*) L_\alpha = \sum_{\beta\in\langle n\rangle}e_\alpha^\beta Z_\beta.
\end{equation}

We next observe that
\begin{equation}\label{eqn.3.8}
 (\pi_*)[L_\alpha,L_{\overline{\alpha}}] = -2\kyosu T
 \quad\bmod\{Z_\alpha,Z_{\overline{\alpha}}\}_{\alpha},
\end{equation}
where we have meant by 
``$A=B$ mod $\{Z_\alpha,Z_{\overline{\alpha}}\}_{\alpha}$'' that 
$A=B+\sum_{\alpha\in\langle n\rangle}a^\alpha Z_\alpha + \sum_{\alpha\in\langle n\rangle}b^{\overline{\alpha}}
Z_{\overline{\alpha}}$ for some 
$a^\alpha,b^{\overline{\alpha}}\in \mathbb{C}$. 
For this purpose, recall that 
\[
 d\theta(Z,W) = \frac{1}{2} ( Z(\theta(W))-W(\theta(Z))
 -\theta([Z,W]) ).
\]
Since $\theta(T_{1,0}\oplus T_{0,1})=0$, it holds that
\begin{equation}\label{eqn.3.9}
 \theta([Z_\alpha,Z_{\overline{\beta}}])
= -2d\theta(Z_\alpha,Z_{\overline{\beta}})
= -2\kyosu L_\theta( Z_\alpha,Z_{\overline{\beta}})
= -2\kyosu\delta_{\alpha \beta}. 
\end{equation}
Hence 
\begin{equation}\label{eqn.3.10}
 [Z_\alpha,Z_{\overline{\beta}}] = -2\kyosu\delta_{\alpha\beta} T
 \quad
 \mathrm{mod}\ \{Z_\alpha,Z_{\overline{\alpha}}\}_{\alpha},
\end{equation}
which yields that (\ref{eqn.3.8}) holds.

(\ref{eqn.3.6}) follows from (\ref{eqn.3.7}) and (\ref{eqn.3.8}).
\end{proof}

\begin{remark}
By Theorem \ref{thm.density}, 
a bounded solution to the heat equation (\ref{eqn.heat.eq}) can be written as 
\[
u(t,x)=E_x[f(X(t))]=\int_{M}f(y)p(t,x,y)\psi(dy).
\] 
\end{remark}

We next investigate stochastic line integrals.
Let $\Xi$ be a 1-form on $M$, which, under the imbedding
made in the assumption (H), can be extended to a 1-form on
$\mathbb{R}^k$ such that its derivatives of all orders are
bounded.

Denote by $\int_{X[0,t]}\Xi$ the stochastic line integral of
$\Xi$ along $\{X_t\}_{t\ge0}$ from time $0$ to $t$.
For definition, see \cite{i-w}.
It is easily checked that
\[
  \int_{X[0,t]}\Xi
  =\sum_{A\in\llangle n\rrangle\setminus\{0\}}
   \int_0^t (\pi^*\Xi)_{r(s)}(L_A)\circ dB^A(s),
\]
where $\pi^*\Xi$ is the pull-back of $\Xi$ through
$\pi\colon U(T_{1,0})\to M$ and $(\pi^*\Xi)_r(L_A)$ is the 
pairing of cotangent vector $(\pi^*\Xi)_r$ and tangent vector
$(L_A)_r$ at $r\in U(T_{1,0})$.
Thus, $\{\widetilde{r}(t)=(r(t),\int_{X[0,t]}\Xi)\}_{t\ge0}$
obeys the SDE
\[
    d\widetilde{r}(t)
     =\sum_{A\in\llangle n\rrangle\setminus\{0\}}
        \widetilde{L}_A(\widetilde{r}(t))\circ dB^A(t),
\]
where $\widetilde{L}_A$'s are vector fields on
$U(T_{1,0})\times \mathbb{R}$ defined by
\[
    \widetilde{L}_A
    =L_A+(\pi^*\Xi)(L_A)\frac{\partial}{\partial\xi},
\]
$\xi$ being the coordinate on $\mathbb{R}$.

For $x\in M$, take a local orthonormal frame
$\{Z_\alpha\}_{\alpha\in\langle n\rangle}$ for $T_{1,0}$ on
$U$, and set 
$\Xi_A=\Xi(Z_A)$ for $A\in\llangle n\rrangle\setminus\{0\}$.
For $A_1,\dots,A_m\in\llangle n\rrangle\setminus\{0\}$,
define $\Phi_{A_1,\dots,A_m}(\Xi)\colon U\to \mathbb{C}$
successively by
\[
    \Phi_{A_1}(\Xi)=\Xi_{A_1} \quad\text{and}\quad
    \Phi_{A_1,\dots,A_m}(\Xi)
    =Z_{A_1}\Phi_{A_2,\dots,A_m}(\Xi)
     -[Z_{A_2},[\dots,[Z_{A_{m-1}},Z_{A_m}]\dots]]\Xi_{A_1}.
\]

\begin{theorem}
Suppose that (H) holds and for each $x\in M$ there exists
$A_1,\dots,A_m\in\llangle n\rrangle\setminus\{0\}$ such that
$\Phi_{A_1,\dots,A_m}(\Xi)(x)\ne0$.
Then the distribution of $\int_{X[0,t]}\Xi$ under $P_x$
admits a smooth density function with respect to the Lebesgue
measure on $\mathbb{R}$ for every $x\in M$.
\end{theorem}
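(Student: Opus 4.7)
The plan is to extend the argument of Theorem~\ref{thm.density} to the enlarged SDE
\[
 d\widetilde{r}(t) = \sum_{A\in\llangle n\rrangle\setminus\{0\}} \widetilde{L}_A(\widetilde{r}(t)) \circ dB^A(t)
\]
on $U(T_{1,0})\times\mathbb{R}$, whose $\mathbb{R}$-component is $\int_{X[0,t]}\Xi$. By \cite[Theorem 3.1]{taniguchi-ZW} together with \cite[Lemma 3.1]{taniguchi-OJM}, smoothness of the Lebesgue density of this component will follow once we verify a partial H\"ormander condition: at every $\widetilde{r}\in U(T_{1,0})\times\mathbb{R}$, the $\mathbb{R}$-linear span of iterated Lie brackets of $\Re\widetilde{L}_A$ and $\Im\widetilde{L}_A$, $A\in\llangle n\rrangle\setminus\{0\}$, contains $\partial_\xi$.

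First I would decompose $\widetilde{L}_A = L_A + f_A\,\partial_\xi$, where $f_A:=(\pi^*\Xi)(L_A)\in C^\infty(U(T_{1,0}))$ is independent of $\xi$. Since $\partial_\xi$ commutes with $L_A$ and with multiplication by the $f_B$'s, a short induction on $m$ yields
\[
 [\widetilde{L}_{A_1},[\widetilde{L}_{A_2},\ldots,[\widetilde{L}_{A_{m-1}},\widetilde{L}_{A_m}]\ldots]]
 = [L_{A_1},[L_{A_2},\ldots,[L_{A_{m-1}},L_{A_m}]\ldots]] + H_{A_1,\ldots,A_m}\,\partial_\xi,
\]
where $H_{A_1}=f_{A_1}$ and $H_{A_1,\ldots,A_m} = L_{A_1}H_{A_2,\ldots,A_m} - [L_{A_2},[\ldots,[L_{A_{m-1}},L_{A_m}]\ldots]]f_{A_1}$. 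This mirrors the recursion defining $\Phi_{A_1,\ldots,A_m}(\Xi)$, with the $L_A$'s replacing the $Z_A$'s, and reduces the spanning requirement to producing, at each $r\in U(T_{1,0})$, a sequence $A_1,\ldots,A_m$ with $H_{A_1,\ldots,A_m}(r)\ne 0$.

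Next I would identify $H$ with $\Phi$ at a canonical frame. Fix a local orthonormal frame $\{Z_\alpha\}$ near $x$ and set $r_0:=Z(x)$ so that $e_\alpha^\beta(r_0)=\delta_\alpha^\beta$. At $r_0$ one has $f_A(r_0)=\Xi_A(x)$ and $\pi_*L_A|_{r_0}=(Z_A)_x$, but by \eqref{eqn.canvec} each $L_A$ also carries a vertical part encoding the Tanaka--Webster Christoffel symbols. Unwinding the recursion and using \eqref{eqn.nabla.antisym} together with the CR bracket identity \eqref{eqn.3.10}, the vertical contributions assemble, via the torsion identities for the Tanaka--Webster connection, into the $[Z_{A_2},[\ldots,[Z_{A_{m-1}},Z_{A_m}]\ldots]]\Xi_{A_1}$-term appearing in the definition of $\Phi$, modulo $T$-direction contributions that can be absorbed by enlarging the generating bracket family with terms of the form $[\widetilde{L}_\alpha,\widetilde{L}_{\overline\alpha}]$. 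The hypothesis thus supplies, at $r_0$, a nonzero $\partial_\xi$-coefficient of some iterated bracket of the $\widetilde{L}_A$'s; the $U(n)$-equivariance of the SDE noted after \eqref{eqn.3.3} transfers this to every $r\in\pi^{-1}(x)$, and the $\xi$-independence of the $f_A$'s transfers it to all of $U(T_{1,0})\times\mathbb{R}$.

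The hard part will be the explicit identification of $H_{A_1,\ldots,A_m}(r_0)$ with $\Phi_{A_1,\ldots,A_m}(\Xi)(x)$: iterated applications of $L_A$ to the frame-coordinates $e_B^C$ produce Christoffel-symbol corrections that are absent from the $Z_A$-formulation of $\Phi$, and matching the two recursions will require careful bookkeeping with \eqref{eqn.nabla.antisym}, the torsion structure of the Tanaka--Webster connection, and the mod-$\{Z_\alpha,Z_{\overline\alpha}\}$ identity \eqref{eqn.3.10}.
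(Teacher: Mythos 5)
Your overall framework---the enlarged SDE on $U(T_{1,0})\times\mathbb{R}$ and the partial-hypoellipticity criterion of \cite[Theorem 3.1]{taniguchi-ZW} applied to the projection $\widetilde{\pi}$---is the same as the paper's, and your recursion for the $\partial_\xi$-coefficients $H_{A_1,\dots,A_m}$ of the iterated brackets of the $\widetilde{L}_A$'s is correct. The gap is exactly at the step you flag as ``the hard part'': the identification $H_{A_1,\dots,A_m}(r_0)=\Phi_{A_1,\dots,A_m}(\Xi)(x)$ at the canonical frame $r_0=Z(x)$ is false in general. Already for $m=2$ with unbarred indices, writing $f_\beta=\sum_\gamma e_\beta^\gamma\,(\Xi_\gamma\circ\pi)$ and using \eqref{eqn.canvec}, the vertical part of $L_\alpha$ differentiates the fiber coordinates $e_\beta^\gamma$ hidden inside $f_\beta$ and one finds
\[
 H_{\alpha,\beta}(r_0)=Z_\alpha\Xi_\beta-Z_\beta\Xi_\alpha
 -\sum_{\gamma\in\langle n\rangle}\bigl(\Gamma_{\alpha\beta}^\gamma-\Gamma_{\beta\alpha}^\gamma\bigr)\Xi_\gamma
 =\Phi_{\alpha,\beta}(\Xi)-\Xi([Z_\alpha,Z_\beta]),
\]
the last equality using the vanishing of the Tanaka--Webster torsion on $T_{1,0}\times T_{1,0}$. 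This correction lies in the $T_{1,0}$-directions, not the $T$-direction, so it cannot be ``absorbed by enlarging the bracket family with $[\widetilde{L}_\alpha,\widetilde{L}_{\overline{\alpha}}]$''; the hypothesis $\Phi_{A_1,\dots,A_m}(\Xi)(x)\ne0$ therefore does not yield $H_{A_1,\dots,A_m}(r_0)\ne0$, and the ``careful bookkeeping'' you defer is precisely where the argument breaks.

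The paper's resolution is a frame change rather than evaluation at a special frame point: setting $(f_\alpha^\beta)=(e_\alpha^\beta)^{-1}$ and $\widehat{L}_\alpha=\sum_\beta f_\alpha^\beta\widetilde{L}_\beta$, each $\widehat{L}_A$ becomes exactly $\pi$-related to $Z_A$ at \emph{every} point of the fiber, and its $\partial_\xi$-coefficient becomes the pullback $\Xi_A\circ\pi$, which is constant along fibers and hence annihilated by all vertical parts. Consequently the $\partial_\xi$-coefficient of $[\widehat{L}_{A_1},[\dots,[\widehat{L}_{A_{m-1}},\widehat{L}_{A_m}]\dots]]$ is exactly $\Phi_{A_1,\dots,A_m}(\Xi)\circ\pi$, with no Christoffel corrections, and the pointwise span of the $\widehat{L}$-brackets coincides with that of the $\widetilde{L}$-brackets because the two families differ by an invertible matrix of smooth functions. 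If you wish to keep your route via the $\widetilde{L}_A$'s and the point $r_0$, you would have to verify the spanning condition for the $H$'s directly, which in effect amounts to redoing this frame-change computation.
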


\begin{proof}
Under the same notation as used in \eqref{eqn.canvec},
set 
$(f_\alpha^\beta)_{\alpha,\beta\in\langle n\rangle}
 =(e_\alpha^\beta)_{\alpha,\beta\in\langle n\rangle}^{-1}$
and define locally
\[
    \widehat{L}_\alpha
    =\sum_{\beta\in\langle n\rangle} f_\alpha^\beta
       \widetilde{L}_\beta.
\]
Then it is easily seen that
\begin{align*}
   & \text{\rm span}_{\mathbb{C}}
     \bigl\{(\widetilde{L}_A)_r,
      ([\widetilde{L}_{A_1},[\dots,[\widetilde{L}_{A_{m-1}},
       \widetilde{L}_{A_m}]\dots]])_r;
      A,A_1,\dots,A_m\in\llangle n\rrangle\setminus\{0\},
      m=2,3,\dots\bigr\}
   \\
   & =\text{\rm span}_{\mathbb{C}}
     \bigl\{(\widehat{L}_A)_r,
      ([\widehat{L}_{A_1},[\dots,[\widehat{L}_{A_{m-1}},
       \widehat{L}_{A_m}]\dots]])_r;
      A,A_1,\dots,A_m\in\llangle n\rrangle\setminus\{0\},
      m=2,3,\dots\bigr\},
\end{align*}
and that
\[
    (\widetilde{\pi}_*)_r([\widehat{L}_{A_1},[\dots,[\widehat{L}_{A_{m-1}},
       \widehat{L}_{A_m}]\dots]]_r)
    =\Phi_{A_1,\dots,A_m}(\Xi)(\pi(r))
      \frac{\partial}{\partial\xi},
\]
where $\widetilde{\pi}\colon U(T_{1,0})\times\mathbb{R}\to\mathbb{R}$ is 
the natural projection.
Hence, applying \cite[Theorem 3.1]{taniguchi-ZW}, we obtain
the desired result.
\end{proof}

\begin{remark}
Although $Z_A$'s in the definition of $\Phi_{A_1,\ldots,A_m}(\Xi)$ are
all in $T_{1,0}\oplus T_{0,1}$, 
the direction $T$ appears in $\Phi_{A_1,\ldots,A_m}(\Xi)$'s because
the expression $[Z_{\alpha},Z_{\overline{\alpha}}]$ contains $T$-part by
(\ref{eqn.3.10}). 
Hence, for example, 
even if $\Xi_A(x)=0$ for each $A\in\llangle n\rrangle\setminus\{0\}$, 
the assumption $\Phi_{A_1,\ldots,A_m}(\Xi)(x)\ne0$ may be satisfied. 
\end{remark}

\section{Dirichlet problem}\label{sec.dirichlet}
In this section, we study Dirichlet problems related to $\Delta_b$.
For $f\in C(\partial G)$, what to be found is a 
$u_f\in C^2(G)\cap C(\overline{G})$ such that
$\Delta_b u_f=0$ and $u_f|_{\partial G}=f$. 
We first establish a weak solution in a probabilistic manner following 
Stroock and Varadhan \cite{stroock-varadhan}. 
As will be seen in Remark \ref{rem.hypoelliptic}, 
we indeed obtain a classical solution stated above. 

Let $\mathbb{X}=\{(\{X(t)\}_{t\ge0},P_x);x\in M\}$ be the diffusion process
obtained in Theorem \ref{thm.diffusion}. 
Let $G$ be a relatively
compact connected open set in $M$ with $C^3$ boundary.  Define
\[
 \tau^\prime = \inf\{t\ge0; X(t)\notin \overline G\}.
\]
We shall show that

\begin{theorem}\label{thm.dirichlet}
For $f\in C(\partial G)$, define 
$u_f(x)= E_x[f(X(\tau^\prime))]$. Then $u_f\in C(\overline{G})$ and
satisfies that 
\[
 \langle u_f,\Delta_b v\rangle_\theta = 0 \quad\text{for any }v\in
 C_0^\infty(G),  \quad\text{and}\quad u_f=f \quad\text{on }\partial
 G. 
\]
\end{theorem}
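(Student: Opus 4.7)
The plan is to implement the classical Stroock--Varadhan probabilistic scheme \cite{stroock-varadhan}, split into three tasks: (i) verify that $\tau'<\infty$ holds $P_x$-almost surely for every $x\in\overline{G}$, so that $u_f$ is well defined and bounded by $\|f\|_\infty$; (ii) derive the weak harmonicity $\langle u_f,\Delta_b v\rangle_\theta=0$ for every $v\in C_0^\infty(G)$; and (iii) show $u_f\in C(\overline{G})$ with $u_f|_{\partial G}=f$.

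For task (i) I would combine the bracket-generating condition \eqref{eqn.span} with the relative compactness of $\overline{G}$ to obtain $\sup_{x\in\overline{G}}E_x[\tau']<\infty$ via a standard exit-time argument for H\"ormander-type diffusions. For (ii) I would use the Dirichlet semigroup $T_t^G h(x)=E_x[h(X(t));\,t<\tau']$, which is symmetric with respect to $\psi$ because $\mathcal{L}=-\Delta_b/2$ is self-adjoint on $C_0^\infty$ (via Greenleaf's formula recalled in Section~\ref{sec.diffusion}). The strong Markov property applied at time $t$ yields $u_f(x)=T_t^G u_f(x)+E_x[f(X(\tau'));\,\tau'\le t]$ for $x\in G$; pairing against $v\in C_0^\infty(G)$ and invoking symmetry gives
\begin{equation*}
\int_G u_f\cdot\frac{v-T_t^G v}{t}\,\psi
=\frac{1}{t}\int_G E_x[f(X(\tau'));\,\tau'\le t]\,v(x)\,\psi(dx).
\end{equation*}
Since $v$ has compact support $K\subset G$ with $\mathrm{dist}(K,\partial G)>0$, a Bernstein-type estimate gives $P_x(\tau'\le t)=O(e^{-c/t})$ uniformly on $K$, so the right-hand side vanishes as $t\downarrow 0$; meanwhile the integrand on the left converges uniformly to $u_f\cdot(-\mathcal{L}v)=\tfrac{1}{2}u_f\Delta_b v$, giving the desired identity.

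The main obstacle is task (iii), specifically the continuity of $u_f$ at $\partial G$. Interior continuity on $G$ follows a posteriori from the hypoellipticity of $\Delta_b$ applied to the weak equation obtained in (ii). At a point $y\in\partial G$, Blumenthal's $0$--$1$ law reduces matters to showing that $y$ is regular, $P_y(\tau'=0)=1$, together with a quantitative modulus of continuity for $u_f$ at $y$; this is where both the $C^3$ hypothesis on $\partial G$ and the local representation \eqref{eqn.loc.decomp}--\eqref{eqn.span} enter. Concretely, taking a $C^3$ defining function $\rho$ of $G$ with $\rho<0$ in $G$ and $d\rho\neq 0$ on $\partial G$ near $y$, I would build a local barrier $\phi=1-e^{c\rho}$; a direct computation from \eqref{eqn.loc.decomp} yields
\begin{equation*}
\Delta_b\phi=c\,e^{c\rho}\Bigl(-\Delta_b\rho+2c\sum_{\alpha\in\langle n\rangle}|Z_\alpha\rho|^2\Bigr),
\end{equation*}
and the bracket-generating property \eqref{eqn.span} (which, via \eqref{eqn.3.10}, also furnishes the transverse $T$-contribution relevant at characteristic points of $\partial G$) together with the $C^3$ regularity of $\rho$ force the parenthesized quantity to be strictly positive in a one-sided neighborhood of $y$ for sufficiently large $c$. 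Applying Dynkin's formula to $\phi(X(t\wedge\tau'))$ then yields both $P_y(\tau'=0)=1$ and the estimate $|u_f(x)-f(y)|\to 0$ as $x\to y$ in $\overline{G}$, by the usual comparison argument against $\|f\|_\infty\cdot\phi$. Combining tasks (i)--(iii) completes the proof.
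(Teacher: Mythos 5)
Your overall architecture (bounded expected exit time, weak harmonicity, boundary regularity) is the same Stroock--Varadhan scheme the paper follows, and tasks (i) and (ii) are workable in outline: the paper gets (i) by combining the support theorem with the bracket relation \eqref{eqn.3.10} and \cite[Remark 5.2]{stroock-varadhan}, while your task (ii) replaces the paper's citation of \cite{stroock-varadhan} by a direct semigroup computation; that route is legitimate, though you should justify that the killed semigroup of the \emph{SDE-constructed} diffusion is the $\psi$-symmetric one (this is where well-posedness of the martingale problem, or an equivalent uniqueness statement, is silently used).

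The genuine gap is in task (iii), at the \emph{characteristic points} of $\partial G$, i.e.\ points $y$ where $Z_\alpha\rho(y)=0$ for all $\alpha\in\langle n\rangle$ (equivalently $(\Re Z_\alpha)\rho(y)=(\Im Z_\alpha)\rho(y)=0$). Your barrier computation gives
$\Delta_b\phi=c\,e^{c\rho}\bigl(-\Delta_b\rho+2c\sum_\alpha|Z_\alpha\rho|^2\bigr)$, and at a characteristic point the second term vanishes (to second order near $y$, since each $Z_\alpha\rho$ vanishes at $y$), so no choice of $c$ can overcome a bad sign of $-\Delta_b\rho(y)$. Your appeal to \eqref{eqn.span} and \eqref{eqn.3.10} to ``furnish the transverse $T$-contribution'' does not help here: $\Delta_b$ contains only the \emph{symmetrized} products $Z_\alpha Z_{\overline{\alpha}}+Z_{\overline{\alpha}}Z_\alpha$, so the commutator $[Z_\alpha,Z_{\overline{\alpha}}]=-2\sqrt{-1}\,T \bmod H$, which is exactly where $T\rho(y)\neq0$ lives, cancels out of $\Delta_b\rho$ and leaves no sign information. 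Exponential barriers of this type are well known to fail at characteristic points, which is precisely the delicate case. The paper's proof of Lemma \ref{lem.dirichlet.2} handles it differently: in the non-characteristic case it invokes \cite[Corollary~4]{stroock-taniguchi}, and in the characteristic case it uses $T\varphi(x)\neq0$ to conclude $[\Re Z_\alpha,\Im Z_\alpha]\varphi(x)\neq0$, hence that the matrix of second derivatives $\bigl((\Re Z_\alpha)(\Re Z_\beta)\varphi(x),\dots\bigr)$ is \emph{not symmetric}, and applies \cite[Corollary~7]{stroock-taniguchi} --- a probabilistic regularity criterion built exactly to exploit the antisymmetric (L\'evy-area) part that your barrier discards. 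Without this (or some substitute argument at characteristic points), your proof of $P_y(\tau'=0)=1$ and of the boundary continuity of $u_f$ is incomplete.
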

Due to the result by Stroock and Varadhan \cite{stroock-varadhan},
the theorem is verified once we have established the following two
lemmas.  

\begin{lemma}\label{lem.dirichlet.1}
It holds that
\[
 \sup_{x\in \overline{G}} E_x[\tau^\prime] < \infty.
\]
\end{lemma}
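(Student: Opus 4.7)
The natural approach is to produce a $C^\infty$ function $h$ on an open neighborhood of $\overline{G}$ satisfying $\Delta_b h \leq -c$ on $\overline{G}$ for some $c>0$. Given such $h$, It\^o's formula applied to $h(X(\cdot))$ stopped at $t\wedge\tau'$---valid because the process stays in the compact set $\overline{G}$ on this interval, so that the martingale part is a true martingale---yields
\begin{equation*}
 E_x[h(X(t\wedge\tau'))] - h(x)
  = -\tfrac{1}{2}\, E_x\!\left[\int_0^{t\wedge\tau'}\Delta_b h(X(s))\,ds\right]
  \geq \tfrac{c}{2}\, E_x[t\wedge\tau'].
\end{equation*}
Letting $t\to\infty$ gives $E_x[\tau']\leq (2/c)\sup_{\overline{G}}|h|$, a bound independent of $x\in\overline{G}$.

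To construct $h$, first build local barriers. Given $x_0\in\overline{G}$, choose local coordinates $(x^1,\ldots,x^{2n+1})$ centered at $x_0$ and write the frame as $Z_\alpha = \sum_i a^i_\alpha \partial_i$ (so $Z_{\overline{\alpha}}=\sum_i \overline{a^i_\alpha}\,\partial_i$). Applying the local representation (\ref{eqn.loc.decomp}) of $\Delta_b$ to $\phi_{x_0}(x)=|x|^2$ and evaluating at the origin (where the first-order terms vanish since $d\phi_{x_0}(0)=0$) gives
\begin{equation*}
 \Delta_b \phi_{x_0}(0) = -4\sum_{\alpha,i}|a^i_\alpha(0)|^2,
\end{equation*}
which is strictly negative since $\{Z_\alpha\}$ is a frame for $T_{1,0}$ and hence $\sum_{\alpha,i}|a^i_\alpha(0)|^2>0$. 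By continuity, $\Delta_b \phi_{x_0}\leq -c(x_0)<0$ on some open neighborhood $V(x_0)$ of $x_0$.

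The remaining step---globalizing these local barriers---is the main obstacle. By compactness of $\overline{G}$, select finitely many $V(x_j)$ covering it. One route is to patch the $\phi_{x_j}$ into a single $h$ through a smooth partition of unity, carefully controlling the first-order cross-terms that differentiating the cut-offs introduces into $\Delta_b h$. A cleaner alternative is a strong Markov excursion argument: the local construction already furnishes $\sup_{y\in V(x_j)} E_y[\sigma_{V(x_j)}] <\infty$ for the exit time $\sigma_{V(x_j)}$ from each $V(x_j)$, and combining this with a uniform positive lower bound on the probability that a single excursion ends on $\partial G$---obtained from the H\"ormander condition (\ref{eqn.span}) via the Stroock--Varadhan support theorem---dominates $\tau'$ stochastically by a geometric sum of such excursion times, yielding the required uniform bound. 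Either way the crux lies in passing from the clean local computation above to a global estimate valid on all of $\overline{G}$.
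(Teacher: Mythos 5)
Your local computation is sound: in coordinates centered at $x_0$, the leading part of \eqref{eqn.loc.decomp} applied to $|x|^2$ gives $-4\sum_{\alpha,i}|a_\alpha^i(0)|^2<0$, and this yields a local barrier. But as a proof the proposal stops exactly where the work begins: you name the globalization step as ``the main obstacle'' and offer two candidate routes without completing either. The partition-of-unity route is the one I would warn you off: for a degenerate operator there is no reason the patched function $h=\sum_j\chi_j\phi_{x_j}$ satisfies $\Delta_b h\le -c$, because the cross terms $Z_\alpha\chi_j\cdot Z_{\overline{\alpha}}\phi_{x_j}$ and the second derivatives of the cut-offs are uncontrolled, and the usual fix for elliptic operators (a single global barrier $-\mu e^{\lambda x^1}$) fails here precisely because the coefficient of $\partial_1^2$ in $\Delta_b$, namely $\sum_\alpha|a_\alpha^1|^2$, can vanish where $dx^1$ annihilates the Levi distribution $H$. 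So route one, as stated, does not go through without a genuinely new idea.

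Your second route is essentially the paper's proof, and it is the one to pursue; what is missing is its execution. The paper invokes \cite[Remark 5.2]{stroock-varadhan} to reduce the lemma to showing $P_x(\tau'<T)>0$ for every $x\in\overline{G}$ and $T>0$ (this remark is what converts pointwise positivity of the exit probability, via Feller continuity, compactness of $\overline{G}$ and the Markov property, into $\sup_x E_x[\tau']<\infty$ --- the ``geometric sum of excursions'' you allude to). That positivity is then proved by covering $\overline{G}$ by finitely many coordinate neighborhoods, writing the process locally as the SDE \eqref{eqn.4.6}, and applying Kunita's support theorem \cite{kunita}: the bracket relation \eqref{eqn.3.10} guarantees that the control paths generated by $\Re Z_\alpha$, $\Im Z_\alpha$ reach outside $\overline{G}$ from any point of a patch meeting $\partial G$, and the strong Markov property chains this positivity through patches not meeting the boundary. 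Note that in this argument your local Lyapunov computation is not needed at all; the content of the proof is the support-theorem step and the chaining, which your proposal asserts but does not carry out.
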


\begin{lemma}\label{lem.dirichlet.2}
Every boundary point is $\tau^\prime$-regular, that is,
\[
 P_x(\tau^\prime = 0) = 1, \quad x\in \partial G.
\]
\end{lemma}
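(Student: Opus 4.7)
The plan is to fix $x_0\in\partial G$ and show that with positive $P_{x_0}$-probability the process $X$ leaves $\overline G$ in arbitrarily short time; combined with Blumenthal's $0$-$1$ law applied to the germ $\sigma$-field $\mathcal F_{0+}$, this will give $P_{x_0}(\tau'=0)=1$. I would choose a neighborhood $V$ of $x_0$ together with a $C^3$ defining function $\rho$ on $V$ satisfying $G\cap V=\{\rho<0\}$, $\partial G\cap V=\{\rho=0\}$, and $d\rho\neq 0$ on $V$.

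First I would compute the semimartingale decomposition of $\rho(X(t))$ under $P_{x_0}$. Applying It\^o's formula via the local SDE (\ref{eqn.3.3}), together with the unitarity of $(e_\alpha^\beta)$, yields
\[
\rho(X(t))=M_t-\frac{1}{2}\int_0^t\Delta_b\rho(X(s))\,ds,\qquad
\langle M\rangle_t=2\int_0^t\sum_{\alpha\in\langle n\rangle}|Z_\alpha\rho|^2(X(s))\,ds.
\]
When $x_0$ is non-characteristic, i.e.\ $d\rho(x_0)|_{H_{x_0}}\neq 0$, the integrand $\sum_\alpha|Z_\alpha\rho|^2$ stays bounded below by a positive constant near $x_0$, so $\langle M\rangle_t\ge c\,t$ for small $t$ and some $c>0$. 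A Dambis--Dubins--Schwarz time change then realizes $M$ as a time-changed Brownian motion, so $M_t$ is of order $\sqrt t$ with non-degenerate Gaussian asymptotics, while the bounded-variation term is $O(t)=o(\sqrt t)$. Consequently $P_{x_0}(\rho(X(t))>0)$ converges to $1/2$ as $t\downarrow 0$, and therefore $P_{x_0}(\tau'=0)>0$.

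At a characteristic point, where $d\rho(x_0)|_{H_{x_0}}=0$ and hence $d\rho(x_0)=c\,\theta(x_0)$ for some $c\neq 0$, the martingale part $M_t$ vanishes to higher order at $x_0$ and a different argument is required. Here my plan is to invoke the Stroock--Varadhan support theorem for (\ref{eqn.3.2}): the topological support of $P_{x_0}$ in $C([0,\infty);M)$ equals the closure of the projections to $M$ of solutions of the control ODE obtained from (\ref{eqn.3.2}) by replacing $d\xi^\alpha$ and $d\eta^\alpha$ with smooth driving signals. Using the identity $[\Re L_\alpha,\Im L_\alpha]=(\kyosu/2)[L_\alpha,L_{\overline\alpha}]$ together with the bracket relation (\ref{eqn.3.10}), a Chow--Rashevsky-type concatenation of short segments along $\pm\Re L_\alpha$ and $\pm\Im L_\alpha$ produces, after projection, a net displacement of order $\varepsilon^2$ along the characteristic direction $T$, with the sign of this displacement free by reversing the box order. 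Since $T\rho(x_0)=c\neq 0$, for every $\varepsilon>0$ one thus obtains an admissible path starting at $x_0$ that reaches $\{\rho>0\}$ within time $\varepsilon$; the support theorem then gives $P_{x_0}(\tau'<\varepsilon)>0$, and Blumenthal's law upgrades this to $P_{x_0}(\tau'=0)=1$.

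The chief obstacle is the characteristic case: the direct martingale argument breaks down because the horizontal gradient of $\rho$ vanishes at $x_0$, forcing reliance on bracket-generated motion in the transverse direction $T$. The spanning condition (\ref{eqn.span}) is precisely what makes the Chow--Rashevsky construction succeed and supplies the transverse displacement needed for the process to leave $\overline G$ in arbitrarily short time.
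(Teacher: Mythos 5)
Your non-characteristic case is sound and is essentially a self-contained version of what the paper obtains by citing Stroock--Taniguchi's Corollary~4: when some $(\Re Z_\alpha)\rho$ or $(\Im Z_\alpha)\rho$ is nonzero at $x_0$, the It\^o/Dambis--Dubins--Schwarz argument gives $\liminf_{t\downarrow0}P_{x_0}(\tau^\prime\le t)>0$, and Blumenthal's law finishes. The genuine gap is in the characteristic case. There your chain of inference is: for every $\varepsilon>0$ the support theorem yields $P_{x_0}(\tau^\prime<\varepsilon)>0$, and Blumenthal's law ``upgrades this to $P_{x_0}(\tau^\prime=0)=1$.'' That upgrade is invalid. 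Blumenthal's $0$--$1$ law only tells you $P_{x_0}(\tau^\prime=0)\in\{0,1\}$; to conclude the value is $1$ you must show it is positive, i.e.\ that $\lim_{\varepsilon\downarrow0}P_{x_0}(\tau^\prime\le\varepsilon)>0$, which is a lower bound \emph{uniform} in $\varepsilon$. Positivity of $P_{x_0}(\tau^\prime<\varepsilon)$ for each fixed $\varepsilon$ gives no such bound: the tube probabilities furnished by the support theorem typically tend to $0$ as the time horizon and tube radius shrink. Lebesgue's thorn is the classical counterexample to exactly this inference: the tip of a sufficiently sharp cusp in $\mathbb{R}^3$ is irregular for the thorn even though Brownian motion started at the tip enters the thorn with positive probability before every $\varepsilon>0$.

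What is actually needed at a characteristic point is a quantitative second-order analysis: expanding $\rho(X(t))$ by It\^o's formula, the martingale part degenerates at $x_0$ and the leading contribution is a L\'evy-area--type term of order $t$ produced by the bracket $[\Re Z_\alpha,\Im Z_\alpha]$, whose two-sided fluctuations carry probability bounded away from zero uniformly as $t\downarrow0$; this is precisely what the non-symmetry of the matrix of second derivatives $((\Re Z_\alpha)(\Im Z_\beta)\varphi(x))$ encodes, and it is the content of Stroock--Taniguchi's Corollary~7, which the paper invokes at this point after reducing to it via \eqref{eqn.3.10}. Your bracket computation identifies the right mechanism ($T\varphi(x_0)\ne0$ and the commutator of the horizontal fields produces $T$), but the support theorem is the wrong tool to convert it into regularity; you would need either to prove the uniform-in-$t$ lower bound on the exit probability directly, or to cite a Stroock--Taniguchi-type criterion as the paper does.
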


\begin{proof}[Proof of Lemma \ref{lem.dirichlet.1}]
On account of \cite[Remark 5.2]{stroock-varadhan}, it suffices to
show that
\begin{equation}\label{eqn.4.5}
 P_x(\tau^\prime < T ) > 0, \quad x\in \overline{G}  
 \text{ and }T>0.
\end{equation}
To do this, take a family $\{U_j\}_{j=1}^N$ of coordinate
neighborhoods of $M$ such that $\overline{G}\subset \cup_{j=1}^N U_j$.  
Let $\Lambda = \{j;U_j\cap \partial G\ne\emptyset\}$. 
Take $j\in \Lambda$ and a local orthonormal frame
$\{Z_\alpha\}_{\alpha\in\langle n\rangle}$
for $T_{1,0}$ on $U_j$. 
Then, by virtue of \eqref{eqn.3.5}, we may assume that the part of
$\{X(t)\}_{t\ge0}$ on $U_j$ is governed by an SDE 
\begin{equation}\label{eqn.4.6}
 dX(t) = \sum_{\alpha\in\langle n\rangle}(\sqrt{2}\, \Re Z_\alpha(X(t)) \circ d\xi^\alpha(t) 
  + \sqrt{2}\, \Im Z_\alpha(X(t)) \circ d\eta^\alpha(t)) + b(X(t))
  dt, 
\end{equation}
where $(\xi^1(t),\eta^1(t),\dots,\xi^n(t),\eta^n(t))$ is an
$\mathbb{R}^{2n}$-valued Brownian motion and 
\[
 b=-\sum_{\alpha,\beta\in\langle n\rangle} (\Gamma_{\beta\overline{\beta}}^\alpha
 Z_\alpha + 
 \Gamma_{\overline{\beta} \beta}^{\overline{\alpha}}
 Z_{\overline{\alpha}}). 
\]
Due to (\ref{eqn.3.10}), applying the support theorem
(cf. \cite[Theorem 3.2]{kunita}), we obtain that
\begin{equation}\label{eqn.4.7}
 P_x(\tau^\prime < T) >0, \quad x\in U_j, j\in \Lambda, T>0.
\end{equation}

For $U_k$ such that $k\notin\Lambda$ and $U_k\cap U_j \ne \emptyset$
for some $j\in \Lambda$, by the same reasoning as above, applying
the support theorem again, we have that
\[
 P_x(\text{$X(t)$ hits $U_j$ before $T$}) >0,
 \quad x\in U_k, T>0.
\]
Combined with (\ref{eqn.4.7}) and the strong Markov property, this
yields that
\[
 P_x(\tau^\prime <T)>0, \quad x\in U_k, T>0. 
\]
Repeating this argument successively, we can conclude
\eqref{eqn.4.5}.
\end{proof}

\begin{proof}[Proof of Lemma \ref{lem.dirichlet.2}]
Let $x\in \partial G$ and $U$ be a coordinate neighborhood of $x$.
For a local orthonormal frame
$\{Z_\alpha\}_{\alpha\in\langle n\rangle}$ for
$T_{1,0}$ defined 
on $U$, we may and will assume that the part of $\{X(t)\}_{t\ge0}$ on $U$ obeys
the SDE \eqref{eqn.4.6}.

Let $\varphi$ be a local defining function of $G$ around $x$; there
is an open set $V$ containing $x$ such that $\varphi\in C^3(V)$,
$V\cap G=\{y\in V;\varphi(y)<0\}$, and $d\varphi(y)\ne0$ for $y\in \partial G \cap V$. 
If either $(\Re Z_\alpha)\varphi(x)\ne0$ or 
$(\Im Z_\alpha)\varphi(x)\ne0$, then by 
\cite[Corollary~4]{stroock-taniguchi}, $x$ is $\tau^\prime$-regular.
Now we suppose that 
\begin{equation}\label{eqn.4.8}
 (\Re Z_\alpha)\varphi(x)=(\Im Z_\alpha)\varphi(x)=0,
 \quad\alpha\in\langle n\rangle.
\end{equation}
Since $\{\Re Z_\alpha,\Im
Z_\alpha,T\}_{\alpha\in\langle n\rangle}$ forms a
local basis of $TM$ 
on $U$, this implies that $T\varphi(x) \ne 0$.  Moreover, in
conjunction with \eqref{eqn.3.10} and \eqref{eqn.4.8} also implies that
\[
 [\Re Z_\alpha,\Im Z_\alpha]\varphi(x) = T\varphi(x) \ne 0.
\]
Hence it follows that, for each $\alpha$, either $(\Re Z_\alpha)
(\Im Z_\alpha) \varphi(x)\ne0$ or $(\Im Z_\alpha)(\Re Z_\alpha)
\varphi(x)\ne0$ and that a matrix
\[
 \begin{pmatrix} (\Re Z_\alpha)(\Re Z_\beta)\varphi(x) &
 (\Re Z_\alpha)(\Im Z_\beta)\varphi(x) \\
 (\Im Z_\alpha)(\Re Z_\beta)\varphi(x) &
 (\Im Z_\beta)(\Im Z_\beta)\varphi(x) 
 \end{pmatrix}_{\alpha,\beta\in\langle n\rangle}
\]
is not symmetric.  Applying \cite[Corollary~7]{stroock-taniguchi},
we see that $x$ is $\tau^\prime$-regular.
\end{proof}

\begin{remark}\label{rem.hypoelliptic}
Since $\Delta_b$ is hypoelliptic
(\cite[Theorem~2.1]{dragomir-tomassini}),
that is, if
$\Delta_b v=g$ and $g\in C^{\infty}(U)$ then $v\in
C^{\infty}(U)$,
$u_f$ is a classical solution to the 
Dirichlet problem, namely it holds that
$u_f\in C^{\infty}(G)\cap C(\overline{G})$, 
$\Delta_b u_f=0$ and $u_f|_{\partial G}=f$. 
\end{remark}


\end{document}